\numberwithin{equation}{section}
\newtheorem{theorem}{Theorem}[section]
\newtheorem{lemma}[theorem]{Lemma}
\theoremstyle{definition}
\newtheorem{definition}[theorem]{Definition}
\newtheorem{notation}[theorem]{Notation}
\newtheorem{remark}[theorem]{Remark}
\author{Rich\'ard Balka}
\address{Institute of
Mathematics and Informatics, Eszterh\'azy K\'aroly College,
Le\'anyka str.~4., H-3300 Eger, Hungary}
\email{balkar@cs.elte.hu}
\thanks{Partially supported by Hungarian Scientific Foundation grant no. 72655.}
\date{}
\title[Duality between measure and category in LCA Polish groups]{Duality between measure and category in uncountable locally compact abelian Polish groups}
\begin{document}

\begin{abstract} We show that there is no addition preserving Erd\H{o}s-Sierpi\'nski mapping
 on any uncountable locally compact abelian Polish group.
 This generalizes results of Bartoszy\'nski and Kysiak.
\end{abstract}

\subjclass[2010]{Primary: 03E15, 22B05; Secondary: 28A99}

\keywords{Measure, category, duality, Erd\H{o}s-Sierpi\'nski mapping, LCA groups}

\maketitle

\section{Introduction.}

Let $G$ be a locally compact abelian (LCA) Polish group. Let $\mathcal{M}$
and $\mathcal{N}$ be the ideals of meager and null (with respect to Haar
measure) subsets of $G$.

\begin{definition} A bijection $F\colon G\rightarrow G$ is called an
\emph{Erd\H{o}s-Sierpi\'nski mapping} if $$ X\in \mathcal{N} \Leftrightarrow
F[X]\in \mathcal{M} \quad \textrm{and} \quad X\in \mathcal{M} \Leftrightarrow
F[X]\in \mathcal{N}.
$$ \end{definition}

\begin{theorem} \textbf(Erd\H{o}s-Sierpi\'nski) Assume the Continuum
Hypothesis. Then there exists an Erd\H{o}s-Sierpi\'nski mapping on
$\mathbb{R}$. \end{theorem}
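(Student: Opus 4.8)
The plan is to follow the classical tower/duality argument, reducing the analytic content to a single combinatorial matching between two partitions of $\mathbb{R}$. First I would use CH (so that $|\mathbb{R}|=\aleph_1$) to fix, by transfinite recursion of length $\omega_1$, two increasing sequences of Borel sets $(N_\alpha)_{\alpha<\omega_1}$ and $(M_\alpha)_{\alpha<\omega_1}$, where each $N_\alpha$ is null, each $M_\alpha$ is meager, both sequences are cofinal in $\mathcal{N}$ respectively $\mathcal{M}$, and $\bigcup_\alpha N_\alpha=\bigcup_\alpha M_\alpha=\mathbb{R}$. This is possible because both ideals possess a cofinal family of $\aleph_1$ Borel sets: at stage $\alpha$ one takes a Borel null (resp.\ meager) set containing $\bigcup_{\beta<\alpha}N_\beta$, the $\alpha$-th point of a fixed enumeration of $\mathbb{R}$, and the $\alpha$-th member of the fixed cofinal family, using that a countable union of Borel null (resp.\ meager) sets is again Borel null (resp.\ meager). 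Passing to the cells $P_\alpha=N_\alpha\setminus\bigcup_{\beta<\alpha}N_\beta$ and $Q_\alpha=M_\alpha\setminus\bigcup_{\beta<\alpha}M_\beta$, which partition $\mathbb{R}$, I would record the key equivalence: a set $X$ is null iff $\{\alpha:X\cap P_\alpha\neq\emptyset\}$ is bounded in $\omega_1$, and meager iff $\{\alpha:X\cap Q_\alpha\neq\emptyset\}$ is bounded. This is where cofinality and covering are used, and it converts the measure/category dichotomy into a statement about two indexings of the points of $\mathbb{R}$.

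Next I would assign to each $x\in\mathbb{R}$ its \emph{null level} $\rho(x)$ (the unique $\alpha$ with $x\in P_\alpha$) and its \emph{meager level} $\sigma(x)$ (the unique $\beta$ with $x\in Q_\beta$), and consider the double partition into the cross-cells $C_{\alpha,\beta}=P_\alpha\cap Q_\beta$. The map I want is one that \emph{interchanges the two levels}: a bijection $F\colon\mathbb{R}\to\mathbb{R}$ sending $C_{\alpha,\beta}$ onto $C_{\beta,\alpha}$ for all $\alpha,\beta$, chosen so that the restrictions $C_{\alpha,\beta}\to C_{\beta,\alpha}$ and $C_{\beta,\alpha}\to C_{\alpha,\beta}$ are mutually inverse, so that $F=F^{-1}$. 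Granting such an $F$, the verification is immediate: for $x\in C_{\alpha,\beta}$ we have $\sigma(F(x))=\alpha=\rho(x)$, so if $X$ is null its null levels are bounded by some $\gamma$, whence the meager levels of the points of $F[X]$ are also bounded by $\gamma$ and $F[X]$ is meager; symmetrically $F$ carries meager sets to null sets; and since $F$ is an involution the two implications reverse, giving exactly the biconditionals in the definition of an Erd\H{o}s--Sierpi\'nski mapping.

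The one thing that must be arranged for this $F$ to exist is the cardinality condition $|C_{\alpha,\beta}|=|C_{\beta,\alpha}|$ for every pair $\alpha,\beta$ (with each $C_{\alpha,\alpha}$ mapped to itself by any involution, e.g.\ the identity), and securing it is the crux of the proof. I expect this to be the main obstacle, because the size of a cross-cell $C_{\alpha,\beta}$ is not determined until \emph{both} levels have been decided, so it is only finalized in the limit. I would handle it by building the two towers \emph{simultaneously} by recursion on $\alpha<\omega_1$, interleaving the enumeration of $\mathbb{R}$ with a bookkeeping of the pairs $(\alpha,\beta)$, and at each stage enlarging the current null cell and meager cell by adjoining null, respectively meager, perfect Borel sets chosen disjoint from all points committed so far. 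Since at a countable stage only countably many points and countably many Borel constraints have been imposed while the complement of the Borel null (resp.\ meager) set built so far still contains a null (resp.\ nowhere dense) perfect set, there is always room to inject $\aleph_1$ fresh points into a prescribed cross-cell and its mirror image in balanced fashion; one may in fact aim for every $C_{\alpha,\beta}$ of size $\aleph_1$. Carrying out this bookkeeping so that each $C_{\alpha,\beta}$ and $C_{\beta,\alpha}$ end up with the same cardinality completes the construction; once the symmetric double partition is in place, defining $F$ cell-by-cell and checking bijectivity together with the ideal-interchange property is routine.
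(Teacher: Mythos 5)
The paper states this classical theorem without proof, so I can only measure your argument against the standard one (Oxtoby; Erd\H{o}s--Sierpi\'nski). Your first paragraph is sound: under CH both ideals admit cofinal increasing towers $(N_\alpha)_{\alpha<\omega_1}$, $(M_\alpha)_{\alpha<\omega_1}$ with union $\mathbb{R}$, and a set is null (resp.\ meager) iff it meets only boundedly many of the cells $P_\alpha$ (resp.\ $Q_\alpha$). Your verification in the second paragraph is also correct \emph{granted} the symmetric double partition. The gap is exactly where you locate it, and your third paragraph does not close it. To inject $\aleph_1$ fresh points into an off-diagonal cell $C_{\beta,\delta}=P_\beta\cap Q_\delta$ with $\beta<\delta$, you must find uncountably many points of the already-frozen set $P_\beta$, lying outside $\bigcup_{\gamma<\delta}M_\gamma$, which can be adjoined to $M_\delta$ while keeping it meager; that is, you need an uncountable \emph{meager} subset of $P_\beta\setminus\bigcup_{\gamma<\delta}M_\gamma$. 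Such a subset need not exist: $P_\beta$ is a null set fixed at stage $\beta$, and what remains of it after deleting a meager set can be of Luzin type (uncountable, but meeting every meager set in a countable set), in which case at most countably many of its points can ever be given meager level $\delta$. Your device of adjoining fresh null/meager perfect sets ``disjoint from all points committed so far'' reliably populates only cells both of whose indices are at least the current stage (in effect the diagonal cells $C_{\delta,\delta}$), because a perfect nowhere dense set reserved at stage $\beta$ for a later meager level can be swallowed in the meantime by the Borel meager sets you are forced to absorb for cofinality. Meanwhile the mirror cell $C_{\delta,\beta}$ can be forced to have cardinality $\aleph_1$ by the $\delta$-th Borel null set of the cofinal enumeration, so the required equality $|C_{\beta,\delta}|=|C_{\delta,\beta}|$ can genuinely fail under the bookkeeping you describe. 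This is not a presentational issue; it is the whole content of the theorem.

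The classical proof sidesteps the cross-cell matching entirely: write $\mathbb{R}=A\sqcup B$ with $A$ null and $B$ meager (the Sierpi\'nski decomposition), start the null tower at $A$ and the meager tower at $B$. Then all new null-tower material lies in $B$ and all new meager-tower material lies in $A$; since $B$ minus any null set still has positive measure (hence contains a perfect, null, nowhere dense set) and $A$ minus any meager set is still comeager (hence contains a perfect nowhere dense set), every successor difference of either tower can be arranged to have cardinality exactly $\aleph_1$. Matching the $\alpha$-th null cell with the $\alpha$-th meager cell -- these are disjoint, so the union of the chosen bijections and their inverses is an involution swapping $A$ and $B$ -- yields both biconditionals at once. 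I recommend rebuilding your argument on that decomposition rather than on the double partition.
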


The existence of such a function is independent from ZFC. Our main
question is the following:

Is it consistent that there is an Erd\H{o}s-Sierpi\'nski mapping
$F$ that preserves addition, namely $$\forall x,y \in G \quad
F(x+y)=F(x)+F(y)?$$

This question is attributed to Ryll-Nardzewski in the case $G=\mathbb{R}$.
Besides intrinsic interest, another motivation was the following: If
this statement were consistent then the so called strong measure
zero and strongly meager sets would consistently form isomorphic
ideals. (For the definitions see \cite{Car}.)

First Bartoszy\'nski gave a negative answer to the question in the
case $G=2^{\omega}$, see \cite{Bar}, then Kysiak proved this for
$G=\mathbb{R}$ and answered the question of Ryll-Nardzewski, see
\cite{Kys}, where he used and improved Bartoszy\'nski's idea. We
answer the general case, the goal of this article is to prove the
following theorem:

\begin{theorem} \textbf(Main Theorem) There is no addition preserving
Erd\H{o}s-\linebreak Sierpi\'nski mapping on any uncountable locally compact
abelian Polish group. \end{theorem}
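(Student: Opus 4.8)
The plan is to reduce the general case to the two model cases where the answer is already known, namely $G=2^{\omega}$ (handled by Bartoszy\'nski) and $G=\mathbb{R}$ (handled by Kysiak), by invoking the structure theory of locally compact abelian groups. First I would recall the classification of uncountable LCA Polish groups via the Pontryagin--van Kampen structure theorem: every such $G$ has an open subgroup isomorphic to $\mathbb{R}^n \times K$ where $K$ is a compact abelian group, and since $G$ is second countable and uncountable, either the $\mathbb{R}^n$ factor is nontrivial or the compact part $K$ is infinite (and metrizable). An infinite compact metrizable abelian group contains, after passing to a suitable quotient or closed subgroup, a copy of either $\mathbb{Z}_p^{\omega}$ or a solenoid-type factor; in the totally disconnected case one obtains a closed subgroup topologically isomorphic to a product $\prod_n \mathbb{Z}/m_n$, which is measure- and category-isomorphic to $2^{\omega}$.

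The key structural step is therefore to argue that any uncountable LCA Polish group $G$ admits a closed (or open) subgroup $H$, together with a continuous surjective homomorphism or a direct-product splitting $G \cong H \times L$, where $H$ is one of the two canonical pieces $\mathbb{R}$ or $2^{\omega}$ (up to a Haar-measure- and Baire-category-preserving isomorphism). The point of this splitting is that an addition preserving Erd\H{o}s-Sierpi\'nski mapping $F$ on $G$ would, after quotienting by $L$ or restricting to a full-measure/comeager transversal, induce an addition preserving Erd\H{o}s-Sierpi\'nski mapping on $H$. Here I would use the Fubini-type behaviour of the product Haar measure and the Kuratowski--Ulam theorem for category: fibrewise nullity and fibrewise meagerness of $F[X]$ control the global behaviour, so that the homomorphism property of $F$ descends to the quotient and the duality $\mathcal{N}\leftrightarrow\mathcal{M}$ is preserved on $H$. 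This contradicts the already-established nonexistence results on $\mathbb{R}$ and on $2^{\omega}$.

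The main obstacle I anticipate is making the descent argument genuinely work at the level of the ideals rather than just the groups. An Erd\H{o}s-Sierpi\'nski mapping is a global bijection, and a subgroup or quotient $H$ need not be $F$-invariant, so one cannot simply restrict $F$ to $H$ and expect a bijection of $H$ that still swaps $\mathcal{N}$ and $\mathcal{M}$. The homomorphism hypothesis is what rescues this: $F$ maps subgroups to subgroups, and on the quotient by $\ker$-type pieces one can track how Haar-null and meager sets transform. The delicate quantitative heart — and the part where I expect Bartoszy\'nski's and Kysiak's combinatorial arguments will have to be re-run rather than merely cited as a black box — is showing that the fibrewise size conditions are not destroyed when $H$ is a solenoid or an infinite product of non-dyadic cyclic groups $\mathbb{Z}/m_n$ with unbounded or irregular $m_n$; the cardinal-characteristic computations underlying those two proofs are sensitive to the precise combinatorial model of the group, so the bulk of the work will be verifying that every canonical factor is measure-and-category-isomorphic to one of the two solved cases, or else admits a direct combinatorial treatment in the same spirit.
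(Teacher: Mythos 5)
Your overall reduction strategy (structure theory of LCA groups, pass to an open subgroup $\mathbb{R}^n\times K$, then drill down into the compact part) is in the right spirit, but the proposal has two gaps that I think are fatal as written. The first is the descent mechanism. You propose to induce an addition preserving Erd\H{o}s--Sierpi\'nski mapping on a subgroup or quotient $H$ from the given $F$ on $G$. You correctly identify the obstacle ($H$ need not be $F$-invariant) but the proposed rescue --- ``$F$ maps subgroups to subgroups'' --- does not close it: $F$ is merely an abstract bijective homomorphism, with no continuity or measurability, so $F[H]$ is some subgroup of $G$ that can be dense, non-measurable, and unrelated to $H$; there is no induced map on $H$ or on $G/H$, and Fubini/Kuratowski--Ulam give you nothing about a non-measurable transformation. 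The paper avoids transferring the mapping altogether. It observes that the existence of an addition preserving Erd\H{o}s--Sierpi\'nski mapping forces Carlson's covering properties $(\varphi_{\mathcal{M}})$ and $(\varphi_{\mathcal{N}})$ to be equivalent, then proves $(\varphi_{\mathcal{M}})$ holds in every LCA Polish group (a new topological argument with nested balls avoiding translates of nowhere dense compact sets) and that $(\varphi_{\mathcal{N}})$ fails in every uncountable one. The point is that $(\varphi_{\mathcal{N}})$ is a statement intrinsic to the group and its null ideal, and \emph{it} passes nicely to open subgroups and to quotients by compact subgroups (Lemmas \ref{l1} and \ref{l2}); the mapping does not.

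The second gap is the target list of the reduction. You aim to land only on $\mathbb{R}$ and $2^{\omega}$ (up to measure-and-category isomorphism), but the relevant property involves translates, so only group-compatible identifications are usable, and the group $\mathbb{Z}_p$ of $p$-adic integers cannot be reached this way: its closed subgroups are $p^n\mathbb{Z}_p\cong\mathbb{Z}_p$ and $\{0\}$, its infinite quotients are again $\mathbb{Z}_p$, and it is not topologically isomorphic to any product $\prod_n \mathbb{Z}/m_n$ (a homeomorphism cum measure isomorphism with $2^{\omega}$ does not transport the translation structure). The paper's dual-group reduction therefore necessarily produces three irreducible cases, $\mathbb{T}$, $\prod_n G_n$ with $G_n$ finite, and $\mathbb{Z}_p$, and the last one requires a genuinely new argument (the antilexicographic choice of the sets $J_n$ in Theorem \ref{t1}, needed to control the carrying in $p$-adic addition); the author explicitly notes that Bartoszy\'nski's method does not generalize to $\mathbb{Z}_p$. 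So the ``bulk of the work'' you defer is not a verification that every factor is isomorphic to a solved case --- that verification would fail --- but a new combinatorial construction for the $p$-adic case, plus the positive result $(\varphi_{\mathcal{M}})$, neither of which appears in your outline.
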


Let ($\varphi _{\mathcal{M}}$) denote the following statement (considered
by Carlson in \cite{Car}): For every $F\in \mathcal{M}$ there exists a set
$F'\in \mathcal{M}$ such that
$$\forall x_1,x_2\in G \, \exists x\in G \quad
(F+x_1)\cup(F+x_2)\subseteq F'+x. $$

Let ($\varphi _{\mathcal{N}}$) be the dual statement obtained by replacing
$\mathcal{M}$ by $\mathcal{N}$. If there exists an Erd\H{o}s-Sierpi\'nski mapping
preserving addition then ($\varphi _{\mathcal{M}}$) and ($\varphi _{\mathcal{N}}$)
are equivalent.  First we show that ($\varphi _{\mathcal{M}}$) holds in
LCA Polish groups. In the second part we begin to show that
($\varphi _{\mathcal{N}}$) fails for all uncountable LCA Polish groups by
reducing the general case to three special cases. Finally, in part
three we settle these three special cases.

\section{($\varphi _{\mathcal{M}}$) Holds For All LCA Polish Groups.}

As the known proofs only work for the reals, we had to come up with
a new, topological proof.

\begin{notation} Let $X$ be a metric space, $x\in X$ and $r>0$. Let $B(x,r)$
denote the closed ball of radius $r$ centered at the point $x$.
\end{notation}

\begin{lemma} Let $X$ be a metric space and $K\subseteq X$ a nowhere dense
compact set. Then there exists a function $f\colon \mathbb{R}^{+}
\rightarrow \mathbb{R}^{+}$ such that for all $x\in X$ and $r>0$ there
exists $y\in X$ such that $B(y,f(r)) \subseteq B(x,r) \setminus K$.
\end{lemma}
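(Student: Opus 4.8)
The plan is to fix the radius $r>0$ and extract the single value $f(r)$ from a compactness argument, the idea being that the only balls $B(x,r)$ which are hard to puncture are those whose centers lie close to $K$, and these centers are pinned down by the compactness of $K$. Throughout let $d$ be the metric and, for $\epsilon>0$, let $K_\epsilon=\{z\in X: d(z,K)\le\epsilon\}$ be the closed $\epsilon$-neighborhood of $K$.

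First I would record an elementary reformulation. If $B(x,r)\setminus K$ contains no ball of radius $\epsilon$, then $B(x,r-\epsilon)\subseteq K_\epsilon$. Indeed, any $y$ with $d(x,y)\le r-\epsilon$ satisfies $B(y,\epsilon)\subseteq B(x,r)$ by the triangle inequality, so by hypothesis $B(y,\epsilon)$ must meet $K$, which is to say $d(y,K)\le\epsilon$, i.e.\ $y\in K_\epsilon$. This converts the non-existence of a puncturing sub-ball into the inclusion of a slightly smaller concentric ball in a thin neighborhood of $K$.

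Next I would show that for the fixed $r$ there is an $\epsilon>0$ such that \emph{every} $B(x,r)$ contains a sub-ball of radius $\epsilon$ disjoint from $K$; taking $f(r)$ to be such an $\epsilon$ then proves the lemma. Suppose not. Then for each $n$ there is $x_n$ such that $B(x_n,r)\setminus K$ contains no ball of radius $1/n$, so by the reformulation $B(x_n,r-1/n)\subseteq K_{1/n}$ once $r-1/n>0$. In particular $d(x_n,K)\le 1/n$, so by compactness of $K$ I may pick $k_n\in K$ realizing $d(x_n,k_n)\le 1/n$ and, after passing to a subsequence, $k_n\to k\in K$, whence $x_n\to k$. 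Now I claim the open ball $\{z:d(z,k)<r\}$ lies entirely in $K$. Fix such a $z$ and set $\delta=r-d(z,k)>0$; for $n$ large enough that $d(x_n,k)<\delta/2$ and $1/n<\delta/2$ we get $d(z,x_n)<r-\delta/2<r-1/n$, so $z\in B(x_n,r-1/n)\subseteq K_{1/n}$ and hence $d(z,K)\le 1/n$. Letting $n\to\infty$ gives $d(z,K)=0$, so $z\in K$ because $K$ is closed. Thus a nonempty open ball is contained in $K$, contradicting that $K$ is nowhere dense.

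I expect the main obstacle to be the uniformity in $x$: a pointwise radius is immediate from nowhere density, but the single function $f$ must work for every center at once. This is precisely where compactness of $K$ is essential, since it forces any would-be counterexample sequence of centers to accumulate at a point of $K$ and thereby manufactures an interior point of $K$. I would also stress that $X$ is only assumed to be a metric space, so bounded sequences need not subconverge; the argument sidesteps this by extracting the limit $k$ from the compact set $K$ rather than from $X$, and by transferring convergence to the centers via $d(x_n,K)\to 0$.
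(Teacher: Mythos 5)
Your proof is correct and follows essentially the same route as the paper's: negate the statement for a fixed $r$, use compactness of $K$ to force the bad centers $x_n$ to accumulate at a point $k\in K$, and contradict nowhere density near $k$. The only cosmetic difference is the endgame: via the $K_\epsilon$-neighborhood reformulation you conclude that an entire open ball around $k$ lies inside $K$, whereas the paper concludes (equivalently, since $K$ is closed) that $K$ is dense in a ball around the accumulation point.
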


\begin{proof} Suppose towards a contradiction that $r>0$ is such a number
that there exists a sequence $r _{n} \to 0 $ $ (n\in \mathbb{N})$ and a set
$\{x_{n}\in X:\,n\in \mathbb{N}\}$ such that for all $y\in X$ and $n\in
\mathbb{N}$
\begin{equation} \label{p1} B(y,r _{n}) \nsubseteq B(x_{n},r)\setminus K.
\end{equation}
$r_{n}<r$ holds for large enough $n$, so in the case $y=x_{n}$ we
get that for large enough $n$ there exist $z_{n}\in B(x_{n}, r _{n})
\cap K$. By the compactness of $K$ there exists a convergent
subsequence $\lim _{k \to \infty} z_{n_k}=z\in K$, and so $\lim _{k
\to \infty} x_{n_k}=z\in K$. There is an $N\in \mathbb{N}$ such that
$x_{n_k}\in B(z,\frac{r}{2})$ holds for all $k>N$. Then
$B\left(z,\frac{r}{2}\right)\subseteq B(x_{n_k},r)$, so by
\eqref{p1} for all $k>N$ and $y\in B\left(z,\frac{r}{2}\right)$ we
get $B(y,r _{n_k}) \nsubseteq B\left(z,\frac{r}{2}\right)\setminus
K$, which contradicts that $K$ is nowhere dense, and we are done.
\end{proof}

Every abelian Polish group admits a compatible invariant complete
metric, because it admits a compatible invariant metric by
[\cite{GH}, Thm. 7.3.], and a compatible invariant metric is
automatically complete by [\cite{GH}, Lem. 7.4.]. So we may assume
that the metric on our group is invariant.

\begin{lemma} \label{pl} Let $G$ be an abelian Polish group  and $K\subseteq G$
a nowhere dense compact set. Then there exists a function $l\colon
\mathbb{R}^{+} \rightarrow \mathbb{R}^{+}$  such that for all $x,x_{1},x_{2}\in G$
and $r>0$ there is a $y\in G$ such that
$$B(y,l(r)) \subseteq B(x,r) \setminus
\left((K+x_{1}) \cup (K+x_{2})\right).$$ \end{lemma}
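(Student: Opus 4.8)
The plan is to bootstrap from the previous lemma, which handles avoidance of a single nowhere dense compact set $K$, to the case of two translates $K+x_1$ and $K+x_2$. The key observation is that $(K+x_1)\cup(K+x_2)$ is again a nowhere dense compact set, but its ``avoidance function'' might in principle depend on the translation vectors $x_1,x_2$, whereas the point of the previous lemma is that one can extract a single function $f$ uniform over all centers $x$ and radii $r$. So the real content is to produce a bound $l(r)$ that is uniform not only in $x$ but also in $x_1$ and $x_2$.

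First I would use invariance of the metric to normalize. By the remark preceding the statement, we may assume the metric on $G$ is invariant, so that $B(x,r)=x+B(0,r)$ and, crucially, $K+x_i$ is a translate of $K$ having the exact same geometry as $K$. Thus avoiding $(K+x_1)\cup(K+x_2)$ inside $B(x,r)$ is, after subtracting a translation, the same problem as avoiding $K\cup(K+(x_2-x_1))$ inside a ball of radius $r$; setting $d=x_2-x_1$, the configuration depends only on the single parameter $d$, but I still want uniformity over all $d$.

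The strategy I would follow is a two-step nested application of the single-set lemma. Apply Lemma~1 to the nowhere dense compact set $K$ to obtain a function $f$. Given $B(x,r)$, first use $f$ to find a ball $B(y_1,f(r))\subseteq B(x,r)\setminus(K+x_1)$ that avoids the first translate. Then apply $f$ again inside this smaller ball, now avoiding the second translate: there is $y$ with $B(y,f(f(r)))\subseteq B(y_1,f(r))\setminus(K+x_2)\subseteq B(x,r)\setminus((K+x_1)\cup(K+x_2))$. Setting $l(r)=f(f(r))$ gives a function depending only on $r$ and not on $x,x_1,x_2$, which is exactly what is required. The verification that each step legitimately applies Lemma~1 is routine once one checks that $K+x_i$ is nowhere dense and compact, and that the ball inclusions chain correctly; invariance of the metric guarantees $f$ is the \emph{same} function for every translate $K+x_i$, since each is an isometric copy of $K$.

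The main obstacle I anticipate is precisely this uniformity of $f$ across translates: one must be sure that the function $f$ produced by Lemma~1 for the set $K$ also serves for every translate $K+x_i$ simultaneously. This follows because an invariant metric makes translation an isometry, so the geometric hypothesis (nowhere dense compactness) and the conclusion of Lemma~1 are translation-invariant; hence the single function $f$ obtained for $K$ works verbatim for each $K+x_i$. Once this is granted, the nesting argument closes immediately, and no further estimates are needed.
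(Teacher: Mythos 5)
Your proposal is correct and is exactly the paper's argument: the paper's proof is the one-line observation that $l=f\circ f$ works by invariance of the metric, and your nested two-step application of the single-set lemma (with the translation-isometry remark justifying that the same $f$ serves every translate of $K$) is precisely the intended justification of that line.
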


\begin{proof}  It is easy to see, using the invariance of our metric, that
$l=f\circ f$ works, where $f$ is defined in the previous lemma.
\end{proof}

We may assume that $l(r)<r$ for all $r>0$. Now we prove ($\varphi
_{\mathcal{M}}$).

\begin{theorem} \label{thmst} Let $S$ be a meagre set in an
LCA Polish group $G$. Then there is a meagre set $T\subseteq G$
such that for all $s_{1},s_{2}\in G$ there is a $t\in G$ such that
$(S+s_{1})\cup (S+s_{2}) \subseteq T+t$. \end{theorem}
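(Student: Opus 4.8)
The plan is to reduce the statement about an arbitrary meager set $S$ to the compact nowhere dense case handled by Lemma~\ref{pl}. Since $S$ is meager, I would first write $S \subseteq \bigcup_{n\in\mathbb{N}} K_n$ where each $K_n$ is closed and nowhere dense; the content of the theorem is entirely about the $K_n$, so it suffices to construct $T$ working simultaneously for all of them. The key point is that in an LCA Polish group one can exploit local compactness together with $\sigma$-compactness to assume each $K_n$ is in fact \emph{compact} nowhere dense: covering $G$ by countably many compact sets and intersecting, I would replace the family $\{K_n\}$ by a countable family of compact nowhere dense sets whose union still covers $S$. This is exactly the hypothesis under which Lemma~\ref{pl} applies, giving for each such compact piece a modulus function $l_n$.

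Next I would build $T$ as a countable union of translated shrinking balls removed from a fixed nested sequence, using the $l_n$ to control the radii. More precisely, fix a countable dense set $\{d_m : m\in\mathbb{N}\}$ in $G$ and a summable sequence of radii; the goal is to produce a single meager set $T$ with the \emph{universal covering} property that every pair of translates $(S+s_1)\cup(S+s_2)$ fits inside some translate $T+t$. The natural approach is to make $T$ the complement of a dense $G_\delta$ set that is constructed so that it contains, after any translation by $t$, the complement of $(S+s_1)\cup(S+s_2)$; equivalently, I would arrange that the dense $G_\delta$ "residual" set $G\setminus T$ can be translated to avoid the two copies of $S$ simultaneously.

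The mechanism for this is Lemma~\ref{pl} itself, which guarantees that inside any ball $B(x,r)$ one can find a subball $B(y,l(r))$ avoiding \emph{two} translates of a compact nowhere dense $K$ at once. Iterating this over a fractal-like Cantor scheme of balls indexed by finite sequences, with radii driven down by the functions $l_n$, produces a Cantor set $C$ that is nowhere dense (hence meager) and has the self-similar property that each of its defining balls avoids the relevant translated copies of the $K_n$. I would then take $T$ to be (a translate-closed enlargement of) the complement of the residual set generated by this scheme, and verify by a Baire-category argument that for any $s_1,s_2$ the set $(S+s_1)\cup(S+s_2)$ is contained in a single translate $T+t$: the point $t$ is supplied by the branch of the tree whose balls were chosen, via Lemma~\ref{pl}, to dodge both translated copies.

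The main obstacle will be the bookkeeping that makes a \emph{single} meager set $T$ work uniformly for \emph{all} pairs $s_1,s_2\in G$ rather than for one pair at a time. Lemma~\ref{pl} handles two translates of one compact set locally, but to get the required uniformity I must interleave the countably many compact pieces $K_n$ and the countably many ball-choices so that the resulting Cantor scheme simultaneously encodes enough "room" for every pair; the delicate part is choosing the indexing and the radius functions $l_n$ so that the diagonal construction converges (completeness of the invariant metric is used here) and so that the translation parameter $t$ can always be read off from a single infinite branch. Establishing that this $t$ genuinely absorbs both translates of all the $K_n$ at once — and hence of $S$ — is where the care lies; once that is set up, the meagerness of $T$ is immediate from its construction as a countable union of nowhere dense pieces.
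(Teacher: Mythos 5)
Your reduction to countably many compact nowhere dense pieces $S_n$ and your identification of Lemma~\ref{pl} as the engine are both exactly right, and you correctly name the crux: $T$ must be fixed \emph{before} $s_1,s_2$ are given, yet must absorb every pair. But the proposal does not actually resolve that crux, and the mechanism you sketch is circular. A Cantor scheme whose defining balls are chosen ``to dodge both translated copies of the $K_n$'' must already know $s_1$ and $s_2$ when the balls are placed, since the subball supplied by Lemma~\ref{pl} depends on the pair; and the choice of that subball ranges over an uncountable set, so no countably-branching tree fixed in advance can encode a correct branch for each of the uncountably many pairs. There is also a sign error lurking in the construction: a Cantor set produced by such a scheme is nowhere dense, so the set you call the ``residual set'' is not comeager, and the suggested ``translate-closed enlargement'' of a meager set is in general all of $G$, destroying meagerness.

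The idea you are missing is how the paper decouples $T$ from the pair. One fixes in advance a countable dense set $\{g_m\}$ and a sequence of radii $r_k=l_{h_1(k)}(r_{k-1})/2$ determined only by the functions $l_n$, and defines $T_n$ by deleting from $G$ the balls $B(g_m,r_k)$ (over an enumeration $h(k)=(n,m)$ of $\mathbb{N}\times\mathbb{N}$); this $T=\bigcup_n T_n$ is meager and manifestly independent of $s_1,s_2$. All of the dependence on the pair is pushed into the single translation parameter: for each $(s_1,s_2)$ one builds one nested sequence of balls $B(x_k,r_k)$ (not a tree), at stage $k$ applying Lemma~\ref{pl} inside $B(x_{k-1}+g_{h_2(k)},r_{k-1})$ to find $y_k$ with $B(y_k,2r_k)$ disjoint from $(S_{h_1(k)}+s_1)\cup(S_{h_1(k)}+s_2)$, and setting $x_k=y_k-g_{h_2(k)}$. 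Then for any $t\in B(x_k,r_k)$ the Minkowski inclusion $B(g_{h_2(k)},r_k)+t\subseteq B(g_{h_2(k)}+x_k,2r_k)=B(y_k,2r_k)$ shows the deleted ball, once translated by $t$, still misses both copies of $S_n$; compactness of $B(x_0,r_0)$ gives a point $t_{s_1,s_2}$ in the intersection of all the $B(x_k,r_k)$, and this single $t$ works for every $n$ and $k$. Without this device of centering the deleted balls at a fixed countable dense set and letting $t$ do the positioning, the uniformity you flag as ``where the care lies'' remains unproved.
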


\begin{proof}   We may assume by local compactness (by taking closures of the
nowhere dense subsets and decomposing each of them to countably many
compact sets) that $S=\cup _{n\in \mathbb{N}} S_{n}$, where the $S_n
\textrm{'s } (n\in \mathbb{N})$ are nowhere dense compact sets. The idea is
that we construct nowhere dense $T_{n}\textrm{'s}$ for the
$S_{n}\textrm{'s}$ simultaneously, and $T$ will be the union of the
$T_{n}\textrm{'s}$. Later we will set $T_{n}=\cap _{k\in \mathbb{N}}
T_{n}^{k}$ for some open sets $T_{n}^{k}$, and we will
simultaneously construct a decreasing sequence of closed balls
$B(x_{k},r_{k})$ such that $t=t_{s_{1},s_{2}}$ will be found as
$\cap _{k\in \mathbb{N}} B(x_{k},r_{k})$.

Let $h\colon \mathbb{N} \rightarrow \mathbb{N}\times \mathbb{N}$ be an enumeration of
$\mathbb{N} \times \mathbb{N}$, let $h_{1},h_{2}\colon \mathbb{N}\rightarrow \mathbb{N}$ be the
first and second coordinate functions of $h$ and let $\{ g_n: \,
n\in \mathbb{N} \}$ be a dense set in $G$. Let $l_{n}$ be the function in
the previous lemma for the compact set $S_n$. We define $T$ by
induction, independently from $s_1,s_2$. Let $x_{0}\in G$ be
arbitrary and $r_{0}>0$ so small that $B(x_{0},r_{0})$ is compact
(we can do this by local compactness of $G$), and
$r_k=\frac{l_{h_{1} (k)}(r_{k-1})}{2}$ if $k>0$ and $n\in \mathbb{N}$. Let

\begin{equation} \label{0def} T_{n}^{k} =
\begin{cases}
G & \textrm{ if } n\neq h_{1}(k),\\
G\setminus B(g_{h_{2}(k)},r_k) & \textrm{ if } n=h_{1}(k).
\end{cases}
\end{equation}

The key step of the proof is the following lemma.

\begin{lemma} Assume that $x_{0},x_{1},\dots, x_{k-1}$ are already defined. Then
there exists $x_k \in G$ (which depends on $s_{1}$ and $s_{2}$) such
that $B(x_k,r_k)\subseteq B(x_{k-1},r_{k-1})$ and for all $n\in \mathbb{N}$
and $t\in B(x_{k},r_k)$
\begin{equation} \label{eq} (S_{n}+s_{1}) \cup (S_{n}+s_{2})\subseteq T_{n}^{k}+t
.\end{equation} \end{lemma}
\begin{proof}  (of the Lemma) If $n=h_{1}(k)$, then $T_{n}^{k}=G\setminus
B(g_{h_{2}(k)},r_k)$. Using
$2r_{k}=l_{h_{1}(k)}(r_{k-1})=l_{n}(r_{k-1})$ and Lemma \ref{pl}
there exists $y_{k} \in B(x_{k-1}+g_{h_{2}(k)},r_{k-1})$ such that
\begin{equation} \label{p2} B(y_{k},2r_k)\subseteq
B(x_{k-1}+g_{h_{2}(k)},r_{k-1})\setminus \left((S_{n}+s_{1}) \cup
(S_{n}+s_{2})\right). \end{equation}
Let us define $x_k:= y_{k}-g_{h_{2}(k)}$. Using the definition of
$x_k$ and \eqref{p2} we get that
$$B(x_k,r_k)=B(y_k,r_k)-g_{h_{2}(k)}\subseteq
B(y_k,2r_k)-g_{h_{2}(k)}\subseteq B(x_{k-1},r_{k-1}).$$ We will use
the following easy equation, where the first $+$ is the Minkowski
sum.
\begin{equation} \label{Mink} B(g_{h_{2}(k)},r_k)+B(x_k,r_k)\subseteq
B(g_{h_{2}(k)}+x_{k},2r_{k}). \end{equation}
Using \eqref{p2} again, the definition of $x_{k}$ and \eqref{Mink}
in this order we get that for all $t\in B(x_k,r_k)$
\begin{align*}(S_{n}+s_{1}) \cup (S_{n}+s_{2})&\subseteq G\setminus
B(y_k,2r_k) \\  
&=G\setminus B(g_{h_{2}(k)}+x_{k},2r_k) \\ 
&\subseteq G \setminus B(g_{h_{2}(k)},r_{k})+t.
\end{align*}
Hence $$(S_{n}+s_{1}) \cup (S_{n}+s_{2})\subseteq T_{n}^{k}+t,$$
so \eqref{eq} holds for $n=h_{1}(k)$. If $n\neq h_{1}(k)$ then
$T_{n}^{k}=G$ and \eqref{eq} is obvious, so we are done.
\end{proof}

Now we return to the proof of Theorem \ref{thmst}.
By the compactness of $B(x_{0},r_{0})$ the closed sets $B(x_k,r_k)$
are compact, so the intersection of decreasing sequence of compact
sets $\cap _{k\in \mathbb{N}} B(x_k,r_k)\neq \emptyset$. Let
$$t_{s_{1},s_{2}}\in \cap _{k\in \mathbb{N}} B(x_k,r_k)$$ be
the common shift. By (\ref{eq}) and the definition of
$t_{s_{1},s_{2}}$,

\begin{equation} \label{0eq} (S_{n}+s_{1}) \cup (S_{n}+s_{2})\subseteq
T_{n}^{k}+t_{s_{1},s_{2}} \end{equation}
holds for all $k,n\in \mathbb{N}$. For every $n\in \mathbb{N}$ the set
$T_{n}=\cap _{k\in \mathbb{N}} T_{n}^{k}$ is nowhere dense by \eqref{0def}.
By \eqref{0eq} we easily get for every $n\in \mathbb{N}$
$$(S_n+s_1) \cup (S_n+s_2)\subseteq T_{n}+t_{s_{1},s_{2}}.$$
The set $T=\cup _{n\in \mathbb{N}} T_{n}$ is meagre, and clearly
$$(S+s_1) \cup (S+s_2)\subseteq T+t_{s_{1},s_{2}},$$
and the proof is complete.
\end{proof}

\section{Reduction to $\mathbb{T}$, $\mathbb{Z} _{p}$ and $\prod _{n\in \mathbb{N}} G_{n}$.}

In this section we reduce the general uncountable LCA Polish
groups to some special groups. We follow the strategy developed in
\cite{MEAT}.

\begin{definition} Let us say that an LCA Polish group $G$ is \emph{nice} if
($\varphi _{\mathcal{N}}$) fails in $G$, that is, there is a nullset $N$,
such that for every nullset $N'$ there are $x_1,x_2\in G$ such that
$(N+x_1) \cup (N+x_2) \nsubseteq (N'+x)$ for all $x\in G$.
\end{definition}

\begin{lemma} \label{l1} If an LCA Polish group $G$ has a nice open subgroup $U$
then $G$ is nice.  \end{lemma}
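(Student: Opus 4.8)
The plan is to use as the witnessing nullset for $G$ the very set that witnesses niceness of $U$. First I would record the structural features of an open subgroup $U$ of an LCA Polish group. Being open, $U$ is also closed, and the cosets $\{U+g\}$ form a partition of $G$ into disjoint open sets; since $G$ is separable this partition is countable, so $G=\bigsqcup_{i\in I}(U+g_i)$ with $I$ countable and, say, $g_0=0$. Because $U$ is open, the restriction to $U$ of Haar measure on $G$ is a Haar measure on $U$; consequently a subset of $U$ is null in $U$ if and only if it is null in $G$, and for any $G$-nullset the translate back into $U$ of its intersection with a coset is $U$-null.

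Since $U$ is nice, I would fix a $U$-nullset $N_U$ witnessing the failure of $(\varphi_{\mathcal N})$ in $U$, and claim that $N:=N_U$ (regarded inside $G$) witnesses niceness of $G$. So let $N'\subseteq G$ be an arbitrary $G$-nullset; the task is to produce $x_1,x_2\in G$. I would decompose $N'$ along the cosets by setting $N_i':=(N'\cap(U+g_i))-g_i\subseteq U$, each of which is $U$-null, and form $N_U^*:=\bigcup_{i\in I}N_i'$. As $I$ is countable, $N_U^*$ is $U$-null.

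The key step is the observation that each translate of $N'$ meets $U$ in a single translated piece: for every $x\in G$ one has $(N'+x)\cap U=N_j'+w$ for a suitable index $j$ and a suitable $w\in U$, so that $(N'+x)\cap U\subseteq N_U^*+w$. I would prove this by coset arithmetic: if $z\in N'$ and $z+x\in U$, then $z$ lies in the unique coset $U+g_j$ with $\overline{g_j}=-\overline{x}$ in $G/U$, and $g_j+x\in U$. Granting this, I apply niceness of $U$ to the $U$-nullset $N_U^*$ to obtain $x_1,x_2\in U$ with $(N_U+x_1)\cup(N_U+x_2)\nsubseteq N_U^*+w$ for every $w\in U$. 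Since $N_U\subseteq U$ and $x_1,x_2\in U$, the set $(N_U+x_1)\cup(N_U+x_2)$ is contained in $U$; were it contained in $N'+x$ for some $x\in G$, then intersecting with $U$ would yield containment in $(N'+x)\cap U\subseteq N_U^*+w$, contradicting the choice of $x_1,x_2$. Hence $(N+x_1)\cup(N+x_2)\nsubseteq N'+x$ for all $x\in G$, which is exactly niceness of $G$.

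I expect the coset-arithmetic identity for $(N'+x)\cap U$ to be the main point requiring care, together with the countability of $I$ needed to guarantee that $N_U^*$ is null; the rest is bookkeeping about how Haar measure on $G$ decomposes over the cosets of the open subgroup $U$.
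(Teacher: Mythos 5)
Your proof is correct, and it rests on the same two pillars as the paper's: the countable decomposition $G=\bigcup_i (U+g_i)$ into cosets of the open subgroup, and the fact that the null ideals of $U$ and of $G$ agree on subsets of $U$. The two arguments distribute the work dually, however. The paper takes as its witness for $G$ the enlarged set $\bigcup_i (N_U+g_i)$, spreading the $U$-witness over every coset; given a hypothetical covering $N'+g$, it then picks the $i$ with $g-g_i\in U$ and needs to invoke niceness of $U$ only against the single piece $N'\cap U$ (suitably translated). You instead keep the witness equal to $N_U$ itself and compensate on the other side, aggregating all the coset pieces of the adversary's set into $N_U^{*}=\bigcup_i\bigl((N'\cap(U+g_i))-g_i\bigr)$ before applying niceness of $U$ --- which is indeed forced in your setup, since the index $j$ in your identity $(N'+x)\cap U=N_j'+w$ depends on $x$, and $x$ is quantified only after $x_1,x_2$ are fixed. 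Your coset-arithmetic claim and the final intersection-with-$U$ step are both right, so the argument goes through; the payoff of your version is that the witness for $G$ is literally the witness for $U$, while the paper's version trades that for applying niceness of $U$ to a single translate of $N'\cap U$ rather than to a countable union.
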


\begin{proof}  Let $\mu$ be the Haar measure of the LCA Polish subgroup $U$ and
$N\subseteq U$ be the set that witnesses that $U$ is nice. Because
of $U$ is open and $G$ is separable, we can write a disjoint
countable decomposition $G=\cup _{n=0} ^{\infty} (U+g_{n})$. It is
easy to see that $\nu(B)=\sum _{n=0} ^{\infty} \mu ((B-g_{n})\cap
U)$ is a Haar measure on $G$. We show that the $\nu$ nullset
$N_{G}=\cup _{n=0} ^{\infty} (N+g_{n})$ is a witness that $G$ is
nice. Let $\nu(N'_{G})=0$. Since $N$ witnesses that $U$ is nice,
there are $u_1,u_2 \in U$ such that

\begin{equation}  \left((N+u_{1}) \cup (N+u_{2})\right) \cap \left((N'_{G}\cap
U)+u\right)^c\neq \emptyset \textrm{ for all } u\in U.
\end{equation}

Suppose towards a contradiction that there exists a $g\in G$ such
that \\ $(N_{G}+u_1) \cup (N_{G}+u_2)\subseteq (N'_{G}+g)$. There
exists $i\in \mathbb{N}$ such that $g-g_{i}\in U$. Then
$$(N+u_1) \cup (N+u_2) \subseteq \left(N'_{G}+(g-g_{i})\right)\cap
U=(N'_{G}\cap U)+(g-g_i)
$$ where $g-g_{i}\in U$, a contradiction.
\end{proof}

\begin{lemma} \label{l2}  Assume that $G$ is an LCA Polish group, $H\subseteq G$
is a compact subgroup and $G/H$ is nice. Then $G$ is nice, too.
\end{lemma}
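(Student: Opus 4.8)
The plan is to pull back a witnessing nullset from $G/H$ through the quotient map $\pi\colon G\to G/H$. Let $N\subseteq G/H$ witness that $G/H$ is nice, and set $N_G=\pi^{-1}[N]$. Because $H$ is compact, the quotient map is continuous, open, and proper in the relevant sense; more importantly, the Haar measures are compatible in the sense of Weil's formula: choosing the Haar measures on $G$, $H$, $G/H$ suitably, one has $\int_G f\,d\mu_G=\int_{G/H}\bigl(\int_H f(x+h)\,d\mu_H(h)\bigr)d\mu_{G/H}$. Since $H$ is compact we may take $\mu_H$ to be a probability measure, and then $N$ being $\mu_{G/H}$-null forces $N_G=\pi^{-1}[N]$ to be $\mu_G$-null. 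First I would record this: $\pi^{-1}$ maps nullsets to nullsets (and, dually, the $\pi$-image of a nullset saturated under $H$ is null), which is the measure-theoretic backbone of the reduction.

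Next I would verify that $N_G$ witnesses niceness of $G$. Suppose for contradiction that for some $\mu_G$-nullset $N'_G$ and some shifts we could cover, i.e.\ the negation of niceness. The cleanest route is to replace $N'_G$ by its $H$-saturation $\widetilde{N'_G}=N'_G+H=\pi^{-1}[\pi[N'_G]]$; since $H$ is compact this saturation is still null, and it is $H$-invariant, so it descends to a nullset $M'=\pi[\widetilde{N'_G}]$ in $G/H$. The point is that covering statements for $H$-saturated sets in $G$ correspond exactly to covering statements in $G/H$: because $N_G$ and $\widetilde{N'_G}$ are both unions of $H$-cosets, the inclusion $(N_G+x_1)\cup(N_G+x_2)\subseteq \widetilde{N'_G}+g$ holds in $G$ if and only if the pushed-forward inclusion $(N+\pi(x_1))\cup(N+\pi(x_2))\subseteq M'+\pi(g)$ holds in $G/H$.

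With that correspondence in hand the argument closes quickly. Since $N$ witnesses that $G/H$ is nice, there exist $\bar x_1,\bar x_2\in G/H$ such that $(N+\bar x_1)\cup(N+\bar x_2)\nsubseteq M'+\bar g$ for every $\bar g\in G/H$. Lifting $\bar x_1,\bar x_2$ arbitrarily to $x_1,x_2\in G$ via $\pi$, I would show these $x_1,x_2$ defeat every candidate cover of $N_G$: any $(N'_G+g)$ is contained in $(\widetilde{N'_G}+g)$, whose image downstairs is $M'+\pi(g)$, so failure of the quotient cover yields a point of $(N_G+x_1)\cup(N_G+x_2)$ outside $N'_G+g$. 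Hence $N_G$ witnesses that $G$ is nice.

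The main obstacle, and the place requiring genuine care, is the measure-compatibility step: establishing that $\pi^{-1}$ of a $\mu_{G/H}$-nullset is $\mu_G$-null, and symmetrically that $H$-saturating a null set keeps it null. This is exactly where compactness of $H$ is essential (so that $\mu_H$ is finite and Weil's integration formula gives a genuine bound rather than blowing up), and one must be slightly careful with measurability of the saturated set $N'_G+H$, which is handled by replacing $N'_G$ with a $G_\delta$ null superset before saturating. Everything else is the bookkeeping of translating cover statements across $\pi$, which is routine once the coset structure is used.
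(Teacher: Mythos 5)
Your overall strategy --- pull back the witness via $\pi^{-1}$, use that $\mu_G\circ\pi^{-1}$ is a Haar measure on $G/H$ so that preimages of nullsets are null, and transfer the covering statement to the quotient --- is the same as the paper's. But there is a genuine error in the middle step: the claim that the $H$-saturation $\widetilde{N'_G}=N'_G+H=\pi^{-1}[\pi[N'_G]]$ of a nullset is still null is false, and compactness of $H$ does not rescue it. Take $G=\mathbb{T}^2$, $H=\{0\}\times\mathbb{T}$ and $N'_G=\mathbb{T}\times\{0\}$: this set is $\mu_G$-null, yet $N'_G+H=G$. Weil's formula (with $\mu_H$ a probability measure) gives $\mu_G\bigl(\pi^{-1}[\pi[N'_G]]\bigr)=\mu_{G/H}\bigl(\pi[N'_G]\bigr)$, which can be full while $\mu_G(N'_G)=0$; the finiteness of $\mu_H$ is beside the point. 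Consequently your $M'=\pi[\widetilde{N'_G}]$ need not be a nullset of $G/H$, and niceness of $G/H$ cannot be applied to it. (Your suggested fix of passing to a $G_\delta$ null superset only addresses measurability, not this measure-zero failure. Note also that the quantifiers force you to fix $x_1,x_2$ from a single nullset downstairs before the translate $g$ is revealed, so $M'$ really must be null.)

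The repair goes in the opposite direction, and this is what the paper does: since $(N_G+x_1)\cup(N_G+x_2)$ is a union of $H$-cosets, any covering translate $N'_G+g$ may be \emph{shrunk} to the union of the full $H$-cosets it contains. Set $N''=\{a\in G:\ a+H\subseteq N'_G\}$; then $N''\subseteq N'_G$ is null simply because it is a subset, it is $H$-saturated, the coset-interior operation commutes with translation, and any inclusion $(N_G+x_1)\cup(N_G+x_2)\subseteq N'_G+g$ automatically improves to $(N_G+x_1)\cup(N_G+x_2)\subseteq N''+g$. Now $M'=\pi[N'']$ is genuinely null in $G/H$, you apply niceness of $G/H$ to this $M'$ to choose $\bar x_1,\bar x_2$, lift them to $x_1,x_2$, and the rest of your transfer argument goes through verbatim. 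This is exactly the paper's ``we may assume without loss of generality that $N'$ consists of cosets'': it is a shrinking, not a saturation.
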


\begin{proof}   If $\mu$ is a Haar measure on $G$, $\pi \colon G\rightarrow G/H$ is the
canonical homomorphism, then $\mu _{G}\circ \pi^{-1}$ is a Haar
measure on $G/H$  by [\cite{Ha}, 63. Thm. C.]. So the inverse image
of a nullset in $G/H$ under $\pi$ is a nullset in $G$. Hence if
$N\subseteq G/H$ is a nullset witnessing that $G/H$ is nice then
$\pi ^{-1}(N)\subseteq G$ is a nullset witnessing that $G$ is also
nice since the translated copies of $\pi ^{-1}(N)$ are composed of
cosets of $H$, so if we would like to cover it with a translation of
a set $N'$, then we may assume without loss of generality, that $N'$
consists of cosets, too. It is easy to see that $\pi(N')$ shows that
$N$ is not a witness, a contradiction.
\end{proof}

Now we start reducing the problem to simpler groups.

\begin{definition} Let $G$ be a group and $p$ be a prime number, $G_{p^n}=\{g\in G:
p^{n}g=0 \}$ for every $n\in \mathbb{N}$, and also let $G_{p^{\infty}}=\cup
_{n\in \mathbb{N}} G_{p^n}$. We say that $G$ is a \emph{p-group}, if
$G=G_{p^{\infty}}$. \end{definition}

\begin{definition} Let $p$ be a prime number. An abelian group $G$ is called
\emph{quasicyclic} if it is generated by a sequence $\langle
g_{n}\rangle_{n\in \mathbb{N}}$ with the property that $g_{0}\neq 0$ and
$pg_{n+1}=g_{n}$ for every $n\in \mathbb{N}$. For a fixed prime $p$ the
unique (up to isomorphism) quasicyclic group is denoted by
$C_{p^{\infty}}$.
\end{definition}

\begin{notation} We denote by $\mathbb{T}$ the circle group, by $\mathbb{Z}_{p}$ the group of
$p$-adic integers for every prime $p$ and by $\prod _{n\in \mathbb{N}}
G_{n}$ the product of the finite abelian groups $G_n$'s.
\end{notation}

\begin{remark} \label{rem3} $\mathbb{Z}_{p}$ is the topological space
$\{0,1,\dots,p-1\}^{\omega}$ equipped with the product topology.
Addition is coordinatewise with carried digits from the $n^{th}$
coordinate to the $n+1^{st}$. Both $\mathbb{Z} _p$ and $\prod _{n\in \mathbb{N}}
G_{n}$ are Polish with the product topology. \end{remark}

We will use the following theorem from \cite{MEAT}.

\begin{theorem} Every infinite abelian group contains a subgroup isomorphic to one
of the following:
\begin{itemize}
\item $\mathbb{Z},$
\item $C_{p^{\infty}}$ for some prime $p$,
\item $\bigoplus _{n\in \mathbb{N}} G_{n}$, where each $G_n$ is a finite
abelian group of at least two elements.
\end{itemize}
\end{theorem}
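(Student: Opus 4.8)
The statement is quoted from \cite{MEAT}, but here is how I would prove it directly. The plan is to split on torsion and then run the standard primary-decomposition argument, isolating a single hard case. First I would dispose of the easy cases. If $G$ has an element $g$ of infinite order, then $\langle g\rangle\cong\mathbb{Z}$ and the first alternative holds. So assume $G$ is torsion, and use the primary decomposition $G=\bigoplus_{p}G_{p^{\infty}}$ of a torsion abelian group into its $p$-primary components. If infinitely many of the $G_{p^{\infty}}$ are nontrivial, choose distinct primes $p_0,p_1,\dots$ together with nonzero elements $x_n\in G_{p_n^{\infty}}$; since the orders are pairwise coprime the sum $\bigoplus_{n}\langle x_n\rangle$ is direct, and each $\langle x_n\rangle$ is a nontrivial finite cyclic group, so the third alternative holds. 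Hence I may assume only finitely many primary components are nontrivial; as $G$ is infinite, at least one of them is infinite, and passing to it I reduce to the case that $G$ is an infinite abelian $p$-group.

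Now I would analyze the infinite $p$-group $G$ through its socle $G[p]=\{g\in G:pg=0\}$, an $\mathbb{F}_p$-vector space. If $\dim_{\mathbb{F}_p}G[p]=\infty$, then infinitely many $\mathbb{F}_p$-independent elements of order $p$ generate a copy of $\bigoplus_{n}\mathbb{Z}/p\mathbb{Z}$, and again the third alternative holds. The whole difficulty is therefore concentrated in the case $\dim_{\mathbb{F}_p}G[p]=d<\infty$, where I claim $G$ must contain $C_{p^{\infty}}$. The descending chain $G[p]\supseteq pG\cap G[p]\supseteq p^2G\cap G[p]\supseteq\cdots$ lives in a $d$-dimensional space, so it stabilizes: there is $N$ with $p^nG\cap G[p]=:V$ for all $n\ge N$. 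If $V=0$ then $p^NG$ has trivial socle, forcing $p^NG=0$ (a nonzero $p$-group has nonzero socle); then $G=G[p^N]$, and the filtration $G[p]\subseteq G[p^2]\subseteq\cdots\subseteq G[p^N]$ has successive quotients embedding into $G[p]$ via $x\mapsto p^ix$, so $|G|\le p^{dN}$, contradicting that $G$ is infinite. Thus $V\ne 0$.

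The crux is to show that $H:=p^NG$ is divisible. By the stabilization, $(p^nH)[p]=p^{n+N}G\cap G[p]=V=H[p]$ for every $n\ge 0$; in particular $H[p]=V\subseteq pH$. I would then prove $pH=H$ by induction on the order of an element: given $h$ of order $p^j$ with $j\ge 2$, the element $p^{j-1}h\in V=(p^jH)[p]$ lies in $p^jH$, say $p^{j-1}h=p^jz$, whence $p^{j-1}(h-pz)=0$, so $h-pz$ has smaller order and lies in $pH$ by the inductive hypothesis, giving $h\in pH$. Once $H$ is divisible and nonzero, I finish by extracting a quasicyclic subgroup: starting from any $0\ne x_0\in H[p]$ and using divisibility to choose $x_{n+1}$ with $px_{n+1}=x_n$, the element $x_n$ has order exactly $p^{n+1}$ (since $p^nx_n=x_0\ne0$), so $\langle x_n:n\in\mathbb{N}\rangle\cong C_{p^{\infty}}$. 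I expect the divisibility step for $H$ to be the main obstacle; everything else is bookkeeping around the primary decomposition and the socle dichotomy.
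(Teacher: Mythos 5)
The paper does not prove this statement at all: it is imported verbatim from \cite{MEAT} and used as a black box, so there is no in-paper argument to compare against. Your blind proof is, as far as I can check, complete and correct, and it is the natural elementary route: the non-torsion case gives $\mathbb{Z}$; the primary decomposition plus the coprime-order direct-sum observation gives $\bigoplus_n G_n$ when infinitely many primary components survive; the socle dichotomy gives $\bigoplus_n \mathbb{Z}/p\mathbb{Z}$ in the infinite-dimensional case; and the finite-socle case is handled by your stabilization argument for $p^nG\cap G[p]$. The two delicate points both check out: if the stable subspace $V$ is zero, your bound $|G|\le p^{dN}$ via the embeddings $G[p^{i+1}]/G[p^i]\hookrightarrow G[p]$ correctly forces $G$ finite; and if $V\ne 0$, the induction on the order of $h\in H=p^NG$ (using $H[p]=(p^jH)[p]\subseteq p^jH$ to write $p^{j-1}h=p^jz$ and pass to $h-pz$ of smaller order) does establish $pH=H$, after which the tower $px_{n+1}=x_n$ over a nonzero $x_0\in H[p]$ yields a copy of $C_{p^\infty}$. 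An alternative, less self-contained route would invoke Kulikov's theory (an abelian $p$-group with finite socle is a direct sum of a finite group and finitely many copies of $C_{p^\infty}$), but your direct divisibility argument avoids any appeal to structure theorems beyond the primary decomposition, which is arguably preferable in a paper that otherwise only cites the result.
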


\begin{theorem} $\mathbb{T}$, $\mathbb{Z} _{p}$ and $\prod _{n\in \mathbb{N}} G_{n}$ are nice. \end{theorem}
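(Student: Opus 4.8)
The plan is to prove that each of the three groups $\mathbb{T}$, $\mathbb{Z}_p$, and $\prod_{n\in\mathbb{N}} G_n$ is nice, by exhibiting an explicit nullset $N$ and showing that no single nullset $N'$ can, up to a single translate, simultaneously cover two translates of $N$. The common strategy should follow the Bartoszy\'nski--Kysiak template: build $N$ from a product-like structure so that translating $N$ by two well-chosen shifts $x_1, x_2$ produces a set whose ``spread'' in every coordinate block is too large to fit inside any single translate of a fixed nullset $N'$. I would first handle $\mathbb{Z}_p$ and $\prod_{n\in\mathbb{N}} G_n$ together, since by Remark \ref{rem3} both are (coordinatewise or carry-adjusted) products over $\omega$ of finite pieces, so a measure-theoretic combinatorial argument on finite blocks applies uniformly; then I would treat $\mathbb{T}$ separately, reducing it if possible to the compact totally disconnected case via its dyadic or $p$-adic substructure.

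Concretely, for a product $\prod_{n} A_n$ of finite abelian groups with the Haar (uniform product) measure, I would choose a rapidly increasing sequence of indices and define $N$ to be the set of points whose behavior on successive finite blocks is constrained so that $\mu(N)=0$ but $N$ has large projections onto infinitely many blocks. The key combinatorial input is a pigeonhole/counting estimate: if $N'$ has small measure, then on all but finitely many blocks the conditional ``slice'' of $N'$ must occupy a small fraction of the block, whereas $(N+x_1)\cup(N+x_2)$ can be arranged to occupy, on infinitely many blocks and for suitable $x_1,x_2$, a fraction strictly larger than any fixed translate of such a small slice can accommodate. Making $x_1 - x_2$ large on infinitely many coordinates forces the two translates of $N$ apart enough that a single translate $N'+x$ cannot catch both. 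I expect to quote or reprove a finitary lemma of the form: for finite groups and for sets of small density, a single translate cannot cover the union of two generic translates of a fixed ``fat'' set.

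For $\mathbb{T}$, the cleanest route is to exploit a compact subgroup or a measure-preserving coordinatization: identifying $\mathbb{T}$ with $[0,1)$ under addition mod $1$ and using a base-$b$ (or dyadic) digit expansion, one realizes $\mathbb{T}$ up to a null set as a product $\prod_n \{0,\dots,b-1\}$ with normalized counting measure, reducing the problem to the product case already handled, modulo the carrying/boundary issues inherent in digit expansions. Alternatively, one can invoke Lemma \ref{l2}: if one can find a suitable compact subgroup $H$ with $\mathbb{T}/H$ nice, niceness lifts; but since the natural quotients of $\mathbb{T}$ are again $\mathbb{T}$ or finite, the self-referential digit reduction seems more direct. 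The main obstacle throughout will be controlling the interaction between the additive structure and the block decomposition---in $\mathbb{Z}_p$ the carries propagate upward and in $\mathbb{T}$ the mod-$1$ wraparound couples adjacent digits---so the counting estimate must be robust enough to survive a bounded amount of carry contamination on each block. I would isolate this carry control as a separate lemma and arrange the block sizes to grow fast enough that the carry corrections contribute a vanishing fraction, leaving the density gap intact.
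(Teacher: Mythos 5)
Your overall strategy---running the Bartoszy\'nski--Kysiak block-decomposition argument on the product structure of $\mathbb{Z}_p$ and $\prod_{n\in\mathbb{N}}G_n$---is essentially the paper's, which follows Kysiak's proof line by line and records the needed modifications (for $\mathbb{T}$ the paper simply cites Kysiak, so your delicate digit-expansion reduction of $\mathbb{T}$ to a product, with its wraparound issues, is not needed). However, there is a genuine gap at the one point where a new idea is actually required: the carry control in $\mathbb{Z}_p$. You assert that, because the carry entering a block is bounded, growing the block sizes makes the carry correction a ``vanishing fraction.'' That is not true for an arbitrary choice of the dense block sets $J_n\subseteq p^{I_n}$. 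Identifying $p^{I_n}$ with $\mathbb{Z}/p^{|I_n|}\mathbb{Z}$, the incoming carry $c\in\{0,1\}$ turns the projection of $(x-y)+[J_{n}]$ onto the block $I_n$ into a union of \emph{two} translates of $J_n$, by $a$ and by $a+1$; for a generic $J_n$ of density $1-1/n^2$ this union can grow by as much as $p^{|I_n|}-|J_n|\approx p^{|I_n|}/n^2$, i.e.\ by measure about $1/n^2$, which is of the same order as (in fact larger than) the measure gap of about $\lambda_n/12$ that the argument must preserve. So ``blocks grow fast'' does not rescue the density gap.

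The paper's fix is structural rather than asymptotic: it chooses $J_n$ to consist of \emph{consecutive} elements of $p^{I_n}$ in the antilexicographical ordering, so that the two carry-shifted translates are adjacent intervals and their union $L_k$ has at most $|J_{n_k}|+1$ elements, costing only $p^{-|I_{n_k}|}$ in measure; the inequality \eqref{pol1} is calibrated exactly against this one-element loss, and Lemma \ref{plem} then produces the required point $v_k$. This is also precisely why the paper treats $\mathbb{Z}_p$ and $\prod_n G_n$ separately rather than ``together'' as you propose: in $\prod_n G_n$ there are no carries, the projection is a single translate, and the lemma is immediate, whereas in $\mathbb{Z}_p$ the interval structure of $J_n$ is indispensable. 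A secondary omission: to get the density statement \eqref{mu} on these groups you cannot use the Lebesgue density theorem; the paper substitutes Fremlin's density theorem for B-sequences, a step your proposal does not address.
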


\begin{proof}  We postpone the proof to the next section.
\end{proof}

\begin{theorem} Every uncountable LCA Polish group is nice, that is, ($\varphi
_{\mathcal{N}}$) fails for every uncountable LCA Polish group. \end{theorem}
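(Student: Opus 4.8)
The plan is to combine the two reduction lemmas above with Pontryagin duality and the algebraic structure theorem. First I would invoke the structure theorem for LCA groups to write $G$ as $\mathbb{R}^{n}\times G_{1}$, where $G_{1}$ contains a compact open subgroup $K$; then $\mathbb{R}^{n}\times K$ is an open subgroup of $G$, so by Lemma \ref{l1} it suffices to prove that $\mathbb{R}^{n}\times K$ is nice. This splits the argument into a non-Archimedean case $n=0$ and a Euclidean case $n\ge 1$.

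For $n=0$ the group $K$ is a compact open subgroup, and since $G$ is uncountable while $G/K$ is countable, $K$ is uncountable. Here duality does the work: the dual $\widehat{K}$ is a countably infinite discrete group, so by the algebraic structure theorem it contains a subgroup $A$ isomorphic to $\mathbb{Z}$, to $C_{p^{\infty}}$, or to $\bigoplus_{n}G_{n}$. As $\widehat{K}$ is discrete, $A$ is automatically closed and its annihilator $A^{\perp}\le K$ is a compact subgroup with $K/A^{\perp}\cong\widehat{A}$. The three possibilities for $A$ dualize precisely to $\mathbb{T}$, $\mathbb{Z}_{p}$ and $\prod_{n}\widehat{G_{n}}$, each of which is nice by the theorem above. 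Lemma \ref{l2} then yields that $K$ is nice, and Lemma \ref{l1} promotes this to $G$. This is the clean half of the proof, and it is exactly why the three target groups are the Pontryagin duals of the three summands in the structure theorem.

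For $n\ge 1$ I would first apply Lemma \ref{l2} to the compact subgroup $\{0\}\times K$, reducing the niceness of $\mathbb{R}^{n}\times K$ to that of $\mathbb{R}^{n}$. Since $\mathbb{T}^{n}$ is a compact uncountable group, the case $n=0$ already shows $\mathbb{T}^{n}$ is nice, so it remains only to pass from $\mathbb{T}^{n}=\mathbb{R}^{n}/\mathbb{Z}^{n}$ down to $\mathbb{R}^{n}$. I expect this to be the main obstacle: the relevant subgroup $\mathbb{Z}^{n}$ is discrete and cocompact rather than compact, so Lemma \ref{l2} does not apply as stated. The plan is to establish the evident analogue of Lemma \ref{l2} for a discrete cocompact subgroup $H$, where the covering map $\pi\colon G\to G/H$ is a local homeomorphism carrying Haar measure to Haar measure, so that $\pi^{-1}$ of a nullset is a nullset and translated copies of $\pi^{-1}(N)$ are unions of $H$-cosets; the coset bookkeeping in the proof of Lemma \ref{l2} then transfers a witness on $\mathbb{T}^{n}$ to a witness on $\mathbb{R}^{n}$. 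Once $\mathbb{R}^{n}$ is nice, Lemma \ref{l2} finishes $\mathbb{R}^{n}\times K$, and Lemma \ref{l1} finishes $G$.

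In summary, the only genuinely new ingredient beyond the stated lemmas is the discrete-cocompact analogue of Lemma \ref{l2} needed to descend from $\mathbb{T}^{n}$ to $\mathbb{R}^{n}$; every other case is a direct duality computation feeding into Lemmas \ref{l1} and \ref{l2}.
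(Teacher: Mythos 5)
Your proposal is correct, and the compact half is exactly the paper's argument: reduce to an open subgroup $K\otimes\mathbb{R}^{n}$ via Lemma \ref{l1}, and for $n=0$ use Pontryagin duality to find a quotient of $K$ isomorphic to $\mathbb{T}$, $\mathbb{Z}_{p}$ or $\prod_{n}\widehat{G_{n}}$ and invoke Lemma \ref{l2}. Where you genuinely diverge is the Euclidean case $n\ge 1$. The paper does not pass through $\mathbb{T}^{n}$ at all: it cites Kysiak's theorem that $\mathbb{R}$ itself is nice, takes a witness $N\subseteq\mathbb{R}$, and asserts that $K\times N\times\mathbb{R}^{n-1}$ is ``obviously'' a witness for $K\otimes\mathbb{R}^{n}$, which implicitly packages a Fubini-type argument for the noncompact factor $\mathbb{R}^{n-1}$. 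You instead quotient out the compact factor $K$ by Lemma \ref{l2}, observe that $\mathbb{T}^{n}$ is already covered by the compact case, and descend from $\mathbb{T}^{n}=\mathbb{R}^{n}/\mathbb{Z}^{n}$ to $\mathbb{R}^{n}$ via a discrete-cocompact analogue of Lemma \ref{l2}. That analogue does hold and is, if anything, easier than the compact version: $\pi^{-1}$ of a nullset is a countable union of nullsets, a witness pulled back from $\mathbb{T}^{n}$ is $\mathbb{Z}^{n}$-invariant, and one may replace a covering nullset $N'$ by the countable intersection $\bigcap_{h\in\mathbb{Z}^{n}}(N'+h)$, which is null, $\mathbb{Z}^{n}$-invariant, and pushes forward to a nullset in $\mathbb{T}^{n}$. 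The trade-off: you must state and prove this extra lemma (it is not Lemma \ref{l2} as written, and you rightly flag this), but in exchange you avoid both the appeal to Kysiak's separate result for $\mathbb{R}$ and the unproved ``obvious'' product-witness claim, so your route is arguably more self-contained, resting only on the niceness of $\mathbb{T}$, $\mathbb{Z}_{p}$ and $\prod_{n}G_{n}$.
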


\begin{proof}  Let $G$ be an LCA Polish group. From the Principal Structure
Theorem of LCA groups [\cite{Ru}, 2.4.1] follows that $G$ has an
open subgroup $H$ which is of the form $H=K\otimes \mathbb{R}^{n}$, where
$K$ is a compact subgroup and $n\in \mathbb{N}$. $G$ is uncountable, so
nondiscrete and $H$ is open, therefore $H$ is a nondiscrete, so
uncountable group. By \ref{l1} it is enough to prove that $H$ is
nice, so we can assume $G=H$.

Assume that $n\geq 1$. $\mathbb{R}$ is nice by \cite{Kys}, let $N$ be a
nullset witnessing this fact. It is obvious that $K\times N \times
\mathbb{R}^{n-1}$ witnesses that $G=K\otimes \mathbb{R}^{n}$ is nice. So we may
assume $n=0$, and we get that $G$ is compact. It is enough to find a
closed subgroup $H\subseteq G$ such that $G/H$ is nice by the
previous lemma. Using [\cite{Ru}, 2.1.2] and the Pontryagin Duality
Theorem, see [\cite{Ru}, 1.7.2]), we get that factors of $G$ are the
same as (isomorphically homeomorphic to) dual groups of closed
subgroups of $\widehat{G}$. If $G$ is compact, then $\widehat{G}$ is
discrete, see [\cite{Ru}, 1.2.5]. So it is enough to find a subgroup
$M\subseteq \widehat{G}$ such that $\widehat{M}$ is nice.

From the previous theorem follows (using that $G=K$ is an infinite
group) that $\widehat{G}$ has a subgroup isomorphic either to $\mathbb{Z}$,
or to $C_{p^{\infty}}$ for some prime $p$, or to $\oplus _{n\in \mathbb{N}}
G_{n}$ (where each $G_n$ is a finite abelian group of at least two
elements). It is sufficient to show that the duals of these groups
are nice. It is well-known that $\widehat{\mathbb{Z}}=\mathbb{T}$, and $\mathbb{T}$ is
nice, see \cite{Kys}.

By [\cite{Ru}, 2.2.3] the dual of a direct sum (equipped with the
discrete topology) is the direct product of the dual groups
(equipped with the product topology), so $\widehat{(\oplus _{n\in
\mathbb{N}} G_{n})}=\prod _{n\in \mathbb{N}} \widehat{G_{n}}$. If $G_{n}$ is
finite so is $\widehat{G_{n}}$, so we are done with the second case.

Finally, $\widehat{C_{p^{\infty}}}=\mathbb{Z}_{p}$, see [\cite{HR}, 25.2]. Hence
we are done by the previous theorem.
\end{proof}

\section{($\varphi _{\mathcal{N}}$) Fails For $\mathbb{T}$, $\mathbb{Z} _{p}$
And $\prod _{n\in \mathbb{N}} G_{n}$.}

We will prove that $\mathbb{T}$, $\mathbb{Z} _{p}$  and $\prod _{n\in \mathbb{N}} G_{n}$
are nice. $\mathbb{T}$ is nice, see \cite{Kys}, so we need to handle the
last two cases. Our proofs are very similar to the Main Theorem of
Kysiak's paper, see [\cite{Kys}, Main Thm. 4.3.], so we only
describe the necessary modifications. Hence in order to follow our
proof one has to read Kysiak's paper parallelly. Unless stated
otherwise, the numbered references in the following proofs refer to
the present paper, and the page numbers refer to \cite{Kys}.

\begin{theorem} \label{t1} $\mathbb{Z}_{p}$ is nice. \end{theorem}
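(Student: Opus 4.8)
The plan is to transport Kysiak's construction of the witnessing nullset from $\mathbb{R}$ to $\mathbb{Z}_p$, using the $p$-adic ball structure in place of dyadic intervals. First I set up the dictionary. The Haar measure $\mu$ on $\mathbb{Z}_p \cong \{0,\dots,p-1\}^{\omega}$ is the normalized product measure giving each digit weight $1/p$. The clopen balls $a+p^k\mathbb{Z}_p$ have measure $p^{-k}$, they are either nested or disjoint by the ultrametric inequality, and the $p^k$ balls of radius $p^{-k}$ are exactly the cosets of $p^k\mathbb{Z}_p$, i.e.\ the fibres of the quotient map $\pi_k\colon \mathbb{Z}_p \to \mathbb{Z}_p/p^k\mathbb{Z}_p \cong \mathbb{Z}/p^k\mathbb{Z}$. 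These play the role of the dyadic intervals of length $2^{-k}$ on the line. Crucially, although addition in $\mathbb{Z}_p$ carries and is not coordinatewise, translation by a fixed $x$ still sends balls to balls of the same radius, since $x+(a+p^k\mathbb{Z}_p)=(x+a)+p^k\mathbb{Z}_p$, and on the set of $p^k$ balls it acts as translation by $\pi_k(x)$ in the cyclic group $\mathbb{Z}/p^k\mathbb{Z}$. So at every finite scale translation is a clean cyclic shift, and I can invoke the combinatorial characterization of nullsets (covers by balls of arbitrarily small total measure) exactly as Kysiak invokes interval covers in \cite{Kys}.

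Next I build the witness, following the skeleton of \cite{Kys}. I partition $\omega$ into consecutive blocks $I_k$ of quickly growing length and in each block prescribe a family $A_k \subseteq \{0,\dots,p-1\}^{I_k}$ of admissible digit patterns whose relative size tends to $0$; the witnessing nullset $N$ is then defined by a $\limsup$/$\liminf$ condition on the blocks, so that $\mu(N)=0$ by Borel--Cantelli while $N$ stays \emph{thick} at infinitely many scales. The point is that for any prospective nullset $N'$, fixing a cover of $N'$ by balls of total measure $<1$ pins $N'$ to finitely much mass at each scale; I then choose two shifts $x_1,x_2$ so that $(N+x_1)\cup(N+x_2)$ fills, at infinitely many blocks $I_k$, a proportion of the corresponding cosets too large to be contained in any single translate $N'+x$. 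Realizing the shifts is where the group structure enters: within the block $I_k$ the required \emph{move a pattern to a complementary pattern} amounts to prescribing $\pi_{k+1}(x_i)$, which is available because $\pi_{k+1}$ is onto $\mathbb{Z}/p^{k+1}\mathbb{Z}$.

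The main obstacle I expect is precisely the carry bookkeeping. In $2^{\omega}$ Bartoszy\'nski's shifts act coordinatewise and in $\mathbb{R}$ Kysiak uses the continuum, whereas in $\mathbb{Z}_p$ a shift chosen to act in a prescribed way on block $I_k$ produces a carry into block $I_{k+1}$ that must not spoil the pattern condition used there. I would handle this by inserting a buffer coordinate between blocks to force carries to die, or equivalently by observing that the relevant statement at scale $k$ depends only on $\pi_k(x_i)$, so I may fix the shifts one block at a time from the bottom up and absorb each carry into the not-yet-constrained higher coordinates. Once the shifts are fixed, verifying that $(N+x_1)\cup(N+x_2)$ cannot be squeezed into $N'+x$ reduces to the same counting inequality as in \cite{Kys}, with $2$ replaced by $p$ and dyadic intervals by $p$-adic balls, re-run in the cyclic group $\mathbb{Z}/p^k\mathbb{Z}$. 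The remaining case $\prod_{n\in\mathbb{N}} G_n$ should then follow by the identical scheme, reading the mixed-radix product in place of the fixed base $p$.
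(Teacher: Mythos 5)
Your overall strategy --- porting Kysiak's construction with $p$-adic cylinders in place of dyadic intervals --- is exactly the paper's, and you correctly locate the crux in the carries. But neither of your proposed fixes for the carry problem works, and the actual key idea is missing. A buffer coordinate between blocks does not kill carries: in base $p$ a carry propagates through every digit equal to $p-1$, and in any case the digits of the points of $N$ and of the translating element on the buffer positions are not yours to choose, so both carry values still occur. Your second fix --- fixing the shifts bottom-up and absorbing each carry into not-yet-constrained higher coordinates --- only controls the carries produced by the shifts you construct; the step where Kysiak's argument genuinely breaks is the final one, where one must exhibit a point $v_k$, prescribed only on the coordinates $I_{n_k}$, avoiding a translate $(x-y)+[J_{n_k}]$ for a difference $x-y$ of two points constrained only by density and measure conditions, i.e.\ for a translation you cannot control. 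For such a $w=x-y$ the projection $L_k=\left((w+[J_{n_k}])\upharpoonright I_{n_k}\right)$ is a union over the two possible incoming carry bits of two shifted copies of $J_{n_k}$, so for a general $J_{n_k}$ it can have up to $2|J_{n_k}|$ elements; since $\mu([J_{n_k}])$ is close to $1$, replacing $(x-y)+[J_{n_k}]$ by the cylinder $[L_k]$ then destroys the measure estimate, and your ``same counting inequality re-run in $\mathbb{Z}/p^k\mathbb{Z}$'' does not go through.

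The paper's new idea, which your proposal lacks, is to choose $J_n$ to be the first $|J_n|$ elements of $p^{I_n}$ in the antilexicographical order, so that under the natural identification of $p^{I_n}$ with $\mathbb{Z}/p^{|I_n|}\mathbb{Z}$ (carry entering at the least significant position) $J_n$ is an interval. Then the two carry-shifted copies overlap and $|L_k|\le |J_{n_k}|+1$, so the error in passing to $[L_k]$ is a single $p^{-|I_{n_k}|}$; the blocks are taken long enough (inequality \eqref{pol1}) that the surplus $\lambda_{n_k}/12$ exceeds exactly this one-point error, yielding $[J'_{n_k}]\setminus[L_k]\neq\emptyset$ and hence the required $v_k$ (Lemma \ref{plem}). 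Without this device, or an equivalent one, your plan stalls at precisely the point you flagged. A smaller omission: the appeal to the Lebesgue density theorem must also be replaced, e.g.\ by the density theorem for B-sequences in topological groups, as the paper does.
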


\begin{proof}  We use the description of $\mathbb{Z}_{p}$ that can be found in Remark
\ref{rem3}. We always write $\mathbb{Z} _{p}$ instead of $(0,1]$. Let $\mu$
be the Haar measure on $\mathbb{Z}_{p}$, and let $\mu_{2}=\mu\times \mu$ be
the Haar measure on $\mathbb{Z}_{p}^{2}$. The proof of Lemma 4.7. in
\cite{Kys} is the same with these modifications.

For the construction of $I_{n}\textrm{'s}$ we fix a partition of the
set $\omega$ instead of $\omega \setminus\{0\}$. For the
inequalities we write $p^{I_n}$ and $p^{|I_n|}$ instead of $2^{I_n}$
and $2^{|I_n|}$. With these changes the following inequality holds.
\begin{equation} \label{pol1} \frac{1}{p^{|I_n|}} \leq
\frac{1}{12}\left(\frac{1}{n^{2}}-\frac{1}{n^5}\right) \textrm{ for
} n>1.  \end{equation}
Let $J_{n}\subseteq p^{I_{n}}$ such that
\begin{equation} \label{pol2}
1-\frac{1}{n^{2}}+\frac{1}{n^{5}}>\frac{|J_n|}{p^{|I_n|}}>1-\frac{1}{n^{2}},
\end{equation}
and $J_n$ consist of the first $|J_n|$ consecutive elements of
$p^{I_n}$ with respect to the antilexicographical ordering
(sequences are ordered according to the largeness of the rightmost
coordinate where they differ).

Let $\mathcal{N}^{*}$ be the filter of full measure sets of $\mathbb{Z}_{p}$. If
$I\subseteq \omega$ and $J\subseteq p^{I}$ then $[J]$ denote the set
$\{ x\in \mathbb{Z}_{p}: \, x\upharpoonright I\in J\}$. We write $
p^{<\omega}$ instead of $2^{<\omega}$ and for $s\in p^{<\omega}$ let
$[s]$ be the set $\{ x\in \mathbb{Z}_{p}: x \textrm{ extends } s \}$.

For the definition of $C$ (at the end of the page 274) we omit the
$C\subseteq \mathbb{R}\setminus \mathbb{Q}$ condition. We may assume about $C$ the
following: for every $s\in p^{<\omega}$ we have $[s]\cap
C=\emptyset$ or $\mu([s]\cap C)>0$
 (if not, consider $C'=C\setminus \cup \{[s]: s\in p^{<\omega}
\textrm{ and } \mu([s]\cap C)=0\}$ instead).

The inductive definition and its proof (beginning on page 275) is
almost the same, too. Write $p^{I_{1}\cup \dots \cup I_{l}}$ and
$p^{[\max I_{l},\infty)}$ instead of $2^{I_{1}\cup \dots \cup
I_{l}}$ and $2^{[\max I_{l},\infty)}$. Let $0^{n}$ be the element of
$p^{\{0,\dots,n-1\}}$ with zero coordinates. For the proof of
\begin{equation} \label{mu} \mu([r_{s}]\cap C)>\frac{t-1}{t}\mu([r_{s}]) \end{equation}
(that can be found at the end of the page 275) we use (instead of
the Lebesgue density theorem) the following more general density
theorem.

\begin{definition} \label{def} Let $X$ be a topological group.
A \emph{B-sequence} in $X$ is
a non-increasing sequence $\langle V_{n}\rangle_{n\in \mathbb{N}}$ of
closed neighborhoods of the identity, constituting a base of
neighborhoods of the identity, such that there is some $M\in \mathbb{N}$
such that for every $n\in \mathbb{N}$ the set $V_{n}-V_{n}$ can be covered
by at most $M$ translates of $V_{n}$. \end{definition}

\begin{theorem} \label{thm} Let $X$ be a topological group with a left Haar
measure $\mu$, and $\langle V_{n}\rangle_{n\in \mathbb{N}}$ a B-sequence in
$X$. Then for any Haar measurable set $E\subseteq X$,
$$ \lim_{n\to \infty} \frac{\mu \left(E\cap(x+V_{n})\right)}{
\mu(V_n)}=\chi _{E} (x)$$ for almost every $x\in X$. \end{theorem}

For the proof of Theorem \ref{thm} and for Definition \ref{def} see
[\cite{Fre}, 447D Thm.] and [\cite{Fre}, 446L Def.]. It is easy to
see that $V_{n}=[0^{n+1}]$ $(n\in \mathbb{N})$ form a
B-sequence, and \eqref{mu} follows by applying
Theorem \ref{thm} for $\langle V_{ n} \rangle_{n\in \mathbb{N}}$.

\begin{remark} \label{rem4} For every countable product space of discrete
groups $\prod _{n\in \mathbb{N}} X_{n}$ the sets
$V_{n}=\left[\{e_{0},\dots,e_{n}\}\right]$ $(n\in \mathbb{N})$ form a B-sequence
(where $e_{i}$ is the neutral element of $X_{i}$), so \eqref{mu} is
valid, too. \end{remark}

We follow the proof till the definition of $U$ (at the end of the
page 277), then we deviate from it. We jump to the formula of the
following lemma that is at the end of the proof in \cite{Kys} (in
the middle of the page 279). For the following lemma we need a new
idea, Kysiak's arguments are not applicable here, he uses a specific
relation between the metric and the addition in $\mathbb{T}$.

\begin{lemma} \label{plem} For every $k\in U$ there is a $v_k \in [J'_{n_k}]$
such that
$$z\upharpoonright
I_{n_k}=v_{k}\upharpoonright I_{n_k} \Rightarrow z\notin
(x-y)+[J_{n_k}].$$ \end{lemma}

\begin{proof}  (of the Lemma) Assume $k\in U$. Remember that $\varepsilon
_{n_k}=\frac{1}{4}\lambda _{n_k}$, and as a consequence of
\eqref{pol2} $\lambda _{n_k}=1-\mu(J_{n_k})\geq
\frac{1}{n_{k}^{2}}-\frac{1}{n_{k}^{5}}$. Using the definition of
$U$, the previous facts and \eqref{pol1} in this order we get
\begin{align*} \mu\left([J'_{n_k}]\setminus
\left((x-y)+[J_{n_k}]\right)\right)&=\mu\left(\left(y+[J'_{n_k}]\right)\setminus
\left(x+[J_{n_k}]\right)\right) \\ 
&\geq \frac{\lambda
_{n_k}-\varepsilon _{n_k}}{2}>\frac{\lambda _{n_k}}{12} \\ 
&\geq \frac{1}{12}\left(\frac{1}{n_{k}^{2}}-\frac{1}{n_{k}^{5}}\right)\geq
\frac{1}{p^{|I_{n_k}|}}.
\end{align*}
Hence \begin{equation} \label{pol3} \mu\left([J'_{n_k}]\setminus
\left((x-y)+[J_{n_k}]\right)\right)>\frac{1}{p^{|I_{n_k}|}}.
\end{equation}
Let us define
\begin{equation} \label{pol3,5}
L_{k}:=\{\left((x-y)+[J_{n_k}]\right)\upharpoonright
I_{n_k}\}\subseteq p^{I_{n_k}}. \end{equation}
 Since the elements of $J_{n_k}$
are consecutive (with respect to the antilexicographical ordering),
we get $|L_{k}|\leq |J_{n_k}|+1$. Using this inequality and the
translation invariance of $\mu$
\begin{equation} \label{pol4}
\mu\left((x-y)+[J_{n_k}]\right)=\mu\left([J_{n_k}]\right)\geq
\mu\left([L_{k}]\right)-\frac{1}{p^{|I_{n_k}|}}. \end{equation}
 Using $(x-y)+[J_{n_k}]\subseteq [L_{k}]$, \eqref{pol4} and
 finally \eqref{pol3} we get
$$\mu\left([J'_{n_k}]\setminus [L_{k}]\right)\geq
\mu\left([J'_{n_k}]\setminus
\left((x-y)+[J_{n_k}]\right)\right)-\frac{1}{p^{|I_{n_k}|}}>0.$$
Hence there is a $v_{k}\in [J'_{n_k}]\setminus [L_{k}]\neq
\emptyset$, and it means by the definition of $L_{k}$ that $$
z\upharpoonright I_{n_k}=v_{k}\upharpoonright I_{n_k} \Rightarrow
z\notin (x-y)+[J_{n_k}],$$ and Lemma \ref{plem} follows.
\end{proof}

We can simply follow the last few lines of Kysiak's proof after the
analogous formula.
\end{proof}

\begin{theorem} \label{t2} $\prod _{n\in \mathbb{N}} G_{n}$ is nice. \end{theorem}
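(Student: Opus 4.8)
The plan is to prove that $\prod_{n\in\mathbb{N}} G_n$ is nice by following the same strategy that worked for $\mathbb{Z}_p$ in Theorem \ref{t1}, since both are countable products of finite discrete groups carrying a natural product Haar measure. First I would fix the Haar measure $\mu$ on $\prod_{n\in\mathbb{N}} G_n$ as the product of the normalized counting measures on each finite $G_n$, and set $\mu_2=\mu\times\mu$ on the square. The whole architecture of Kysiak's argument (partitioning $\omega$ into blocks $I_n$, choosing sets $J_n$ as initial segments in the antilexicographical ordering, building the witnessing nullset by an inductive tree construction) transfers verbatim, so I would again describe only the necessary modifications relative to \cite{Kys}.

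The key bookkeeping change is that the factor $2^{|I_n|}$ appearing throughout Kysiak's estimates must be replaced by $\prod_{i\in I_n}|G_i|$, the number of finite sequences indexed by the block $I_n$. By choosing the partition blocks $I_n$ large enough I would arrange the analogue of \eqref{pol1}, namely
$$\frac{1}{\prod_{i\in I_n}|G_i|}\leq\frac{1}{12}\left(\frac{1}{n^2}-\frac{1}{n^5}\right)\quad\text{for }n>1,$$
which is possible precisely because each $|G_i|\geq 2$ forces these products to tend to infinity. Then I would select $J_n$ to be the first $|J_n|$ elements (antilexicographically) of the block product with density between $1-\frac{1}{n^2}$ and $1-\frac{1}{n^2}+\frac{1}{n^5}$, exactly as in \eqref{pol2}. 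The density-theorem input \eqref{mu} is already handled: Remark \ref{rem4} tells us that $V_n=[\{e_0,\dots,e_n\}]$ forms a B-sequence in any countable product of discrete groups, so Theorem \ref{thm} applies directly here with no circle-group-specific reasoning.

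The main obstacle, just as in the $p$-adic case, is the analogue of Lemma \ref{plem}: for each relevant index $k$ I must produce a point $v_k\in[J'_{n_k}]$ whose restriction to the block $I_{n_k}$ avoids the shifted set $(x-y)+[J_{n_k}]$ on that block. Kysiak's original argument cannot be used because it exploits the interaction between the metric and addition in $\mathbb{T}$; instead I would reuse the block-counting idea developed for $\mathbb{Z}_p$. The crucial property is that because the elements of $J_{n_k}$ are antilexicographically consecutive, the projection $L_k$ of the translate $(x-y)+[J_{n_k}]$ onto the coordinates in $I_{n_k}$ satisfies $|L_k|\leq|J_{n_k}|+1$; the extra "$+1$" again costs only a single block-atom of measure $1/\prod_{i\in I_{n_k}}|G_i|$. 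Combining this with the measure estimate coming from the definition of $U$ and the density bounds above yields $\mu([J'_{n_k}]\setminus[L_k])>0$, so the desired $v_k$ exists. Once this lemma is in place, the remaining few lines of Kysiak's proof carry over unchanged, completing the argument that $\prod_{n\in\mathbb{N}} G_n$ is nice.
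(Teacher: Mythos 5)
Your overall strategy coincides with the paper's: both rerun Kysiak's construction for $\prod_{n\in\mathbb{N}}G_n$ with $2^{|I_n|}$ replaced by $\prod_{i\in I_n}|G_i|$, with the density input supplied by Remark \ref{rem4}, and with a new argument for the key lemma producing $v_k$. Where you genuinely diverge is in that key lemma: you transplant the entire quantitative apparatus of the $\mathbb{Z}_p$ case --- the antilexicographic ordering of $J_n$, the bound $|L_k|\leq |J_{n_k}|+1$, and the requirement $1/\prod_{i\in I_n}|G_i|\leq\frac{1}{12}\left(\frac{1}{n^2}-\frac{1}{n^5}\right)$ --- whereas the paper discards all of it. The observation you are missing is that in a direct product addition is coordinatewise with no carries, so the translate $(x-y)+[J_{n_k}]$ is \emph{exactly} the cylinder $[L_k]$ over $I_{n_k}$, with $|L_k|=|J_{n_k}|$ because translation is a bijection of the finite block $\prod_{i\in I_{n_k}}G_i$; hence $\mu\left([J'_{n_k}]\setminus[L_k]\right)=\mu\left([J'_{n_k}]\setminus\left((x-y)+[J_{n_k}]\right)\right)>0$ follows from the definition of $U$ alone (this is \eqref{pol5}), with no error term to absorb. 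This is precisely why the paper explicitly omits the ordering condition on the $J_n$ in this case. Your argument still reaches a correct conclusion, since $|L_k|=|J_{n_k}|\leq|J_{n_k}|+1$ is trivially true and your chain of inequalities then closes, but the justification you give for that bound is not the right one: consecutiveness of $J_{n_k}$ is what controls the effect of the carry digit in $\mathbb{Z}_p$, and it plays no role in a product of groups (indeed ``antilexicographically consecutive'' has no canonical meaning for arbitrary finite abelian $G_i$, and a translate of a consecutive block need not be consecutive there). So your proof works, but it carries unnecessary machinery and one spurious step of reasoning; the paper's version of Lemma \ref{plem2} is strictly simpler and is the real payoff of passing from $\mathbb{Z}_p$ to a carry-free product.
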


\begin{proof}  We write $\mathbb{N}$ and $\prod _{n\in \mathbb{N}} G_{n}$ instead of $\omega$
and $(0,1]$. Let $\mu$ be the Haar measure on $\prod _{n\in \mathbb{N}}
G_{n}$, and let $\mu_{2}=\mu\times \mu$ be the Haar measure on
$\left(\prod _{n\in \mathbb{N}} G_{n}\right)^{2}$. The proof of Lemma 4.7.
in \cite{Kys} is the same.

For the construction of $I_{n}\textrm{'s}$ we fix a partition of the
set $\mathbb{N}$ instead of $\omega \setminus\{0\}$. Write $\prod _{i\in
I_n} G_i$ and $|\prod _{i\in I_n} G_i|$ instead of $2^{I_n}$ and
$2^{|I_n|}$. At the choice of the sets $J_{n}$ we omit the ordering
condition.

Let $\mathcal{N}^{*}$ be the filter of full measure sets of $\prod _{n\in
\mathbb{N}} G_{n}$. If $I\subseteq \mathbb{N}$ and $J\subseteq \prod _{n\in I} G_n
$ then let $[J]=\left\{x\in \prod _{n\in \mathbb{N}} G_n: \,
x\upharpoonright I\in J\right\}$. We write $\left\{\prod _{i\in I}
G_i: |I|\nolinebreak <\infty\right\}$ instead of $2^{<\omega}$ and
for $s\in \left\{\prod _{i\in I} G_i: |I|\nolinebreak
<\infty\right\}$ let $[s]=\left\{ x\in \prod _{n\in
\mathbb{N}} G_{n}: x \textrm{ extends } s \right\}$.

For the definition of $C$ we omit the $C\subseteq \mathbb{R}\setminus \mathbb{Q}$
condition, and we may assume that for every $s\in  \left\{\prod
_{i\in I} G_i: |I|\nolinebreak <\infty\right\}$ we have $[s]\cap
C=\emptyset$ or $\mu([s]\cap C)>0$. \eqref{mu} follows from Remark
\ref{rem4} instead of the Lebesgue density theorem.

The inductive definition and its proof need the following
modifications. Use $\prod _{i\in I_{1}\cup \dots \cup I_{l}} G_i$
and $\prod _{i=\max I_{l}} ^{\infty} G_i$ instead of $2^{I_{1}\cup
\dots \cup I_{l}}$ and $2^{[\max I_{l},\infty)}$.

We follow the proof till the definition of $U$, then we deviate from
it. Note that by the definition of $U$, for every $k\in U$
\begin{equation} \label{pol5} \mu\left([J'_{n_k}]\setminus
\left((x-y)+[J_{n_k}]\right)\right)=\mu\left(y+[J'_{n_k}]\right)\setminus
\left(\left(x+[J_{n_k}]\right)\right)>0. \end{equation}

We jump to the formula of the
following lemma that is at the end of the proof in \cite{Kys}.
The following lemma is analogous to Lemma \ref{plem},
but the proof is much easier than in the case $\mathbb{Z}_{p}$.

\begin{lemma} \label{plem2} For every $k\in U$ there is a $v_k \in [J'_{n_k}]$
such that
$$   z\upharpoonright
I_{n_k}=v_{k}\upharpoonright I_{n_k} \Rightarrow z\notin
(x-y)+[J_{n_k}].$$ \end{lemma}

\begin{proof} (of the Lemma) Let us define
$$L_{k}:=\{\left((x-y)+[J_{n_k}]\right)\upharpoonright
I_{n_k}\}\subseteq \prod _{i\in I_{n_k}} G_{i},$$
then obviously $(x-y)+[J_{n_k}]=[L_{k}]$. Using \eqref{pol5} we get
$$\mu\left([J'_{n_k}]\setminus [L_{k}]\right)=
\mu\left([J'_{n_k}]\setminus
\left((x-y)+[J_{n_k}]\right)\right)>0.$$
So there is a $v_{k}\in [J'_{n_k}]\setminus [L_{k}]\neq \emptyset$,
and it means by the definition of $L_{k}$ that
$$  z\upharpoonright I_{n_k}=v_{k}\upharpoonright I_{n_k}
\Rightarrow z\notin (x-y)+[J_{n_k}],$$ and Lemma \ref{plem2} follows.
\end{proof}

The last few lines of the proof is the same as in Kysiak's paper
after the analogous formula.
\end{proof}

\begin{remark} Theorem \ref{t2} could be proved easier based on
Bartoszy\'nski's paper \cite{Bar}, but we do not know to generalize
Bartoszy\'nski's method to the case $\mathbb{Z}_{p}$ in Theorem \ref{t1}.
\end{remark}

\noindent \textbf{Acknowledgement} The author is indebted to Tomek
Bartoszy\'nski, M\'arton Elekes and Slawomir Solecki for some ideas.

\end{document}